\newtheorem{theorem}{\bf Theorem}[section]
\newtheorem{corollary}[theorem]{\bf Corollary}
\newtheorem{lemma}[theorem]{\bf Lemma}
\theoremstyle{definition}
\def\dern{{\rm dern}}
\def\adern{{\rm adern}}
\def\cS{{\mathcal S}}
\begin{document}

\title{Degree-associated edge-reconstruction numbers of
double-brooms}
\author
{{Meijie Ma$^{1,2}$\thanks{Corresponding author: Meijie Ma,
E-mail: mameij@mail.ustc.edu.cn}, \ Tingting
Zhou$^{2}$}\\
{\small $^1$School of Management Science and Engineering,}\\
{\small Shandong Technology and Business University, Yantai
264005,
China} \\
{\small $^2$Department of Mathematics,}\\
{\small Zhejiang
Normal University, Jinhua 321004, China }}

\date{}

\maketitle 

\baselineskip16pt

\begin{abstract}
An edge-deleted subgraph of a graph $G$ is an {\it edge-card}. A
{\it decard} consists of an edge-card and the degree of the
missing edge.  The {\it degree-associated edge-reconstruction
number} of a graph $G$, denoted $\dern(G)$, is the minimum
number of decards that suffice to reconstruct $G$.  The {\it
adversary degree-associated edge-reconstruction number}
$\adern(G)$ is the least $k$ such that every set of $k$ decards
determines $G$. We determine these two parameters for all
double-brooms. The answer is usually $1$ for $\dern(G)$,
and $2$ for $\adern(G)$ when $G$ is double-broom.
But there are exceptions in each case.

\end{abstract}

\noindent {\bf Keywords} \ decard, reconstruction number,
degree-associated edge-reconstruction number, double-broom


\baselineskip17.8pt

\section{Introduction}
\label{sec:intro}

The Reconstruction Conjecture of Kelly~\cite{kel1,kel2} and
Ulam~\cite{U} has been open for more than 50 years.  An
unlabeled subgraph of a graph $G$ obtained by deleting one
vertex is a {\it card}.  The multiset of cards is the {\it deck}
of $G$. The Reconstruction Conjecture asserts that every graph
with at least three vertices is uniquely determined by its deck.
Such a graph is {\it reconstructible}.

An {\it edge-card} of a graph $G$ is a subgraph obtained from
$G$ by deleting one edge. Edge-cards are unlabeled; only the
isomorphism class is known. The multiset of edge-cards is the
{\it edge-deck} of $G$. The Edge-Reconstruction Conjecture of
Harary~\cite{H} states that every graph with more than three
edges is determined by its edge-deck (the claw $K_{1,3}$ and the
disjoint union of a triangle and one vertex have the same
edge-deck).  A graph that is determined by its edge-deck is {\it
edge reconstructible}. The Edge-Reconstruction Conjecture is
simply the statement that line graphs are reconstructible.

The {\it degree of an edge} $e$, denoted by $d(e)$, is the
number of edges adjacent to $e$. A  {\it degree-associated
edge-card} or {\it decard} is a pair $(G-e,d(e))$ consisting of
an edge-card and the degree of the missing edge. The {\it
dedeck} is the multiset of decards. For an edge reconstructible
graph $G$, the {\it degree-associated edge-reconstruction number
of $G$}, denoted $\dern(G)$, is the minimum number of decards
that suffice to reconstruct $G$. The {\it  adversary
degree-associated edge-reconstruction number} $\adern(G)$ is the
least $k$ such that every set of $k$ decards determines $G$. The
definitions of degree-associated reconstruction number and
adversary degree-associated reconstruction number are analogous
to the edge setting. There are some papers about them~\cite{BW,
MSW, MSJS}.

The study of $\dern$ and $\adern$ was initiated by Monikandan
and Sundar Raj~\cite{mse}. They determined $\dern(G)$ and
$\adern(G)$ when $G$ is a regular graph, a complete bipartite
graph, a path, a wheel, or a double-star. They also proved that
$\dern(G)\leq 2$ and $\adern(G)\leq 3$ when $G$ is a complete
$3$-partite set whose part-sizes differ by at most $1$. The
authors also determined the parameters for several product
graphs~\cite{mds}.

In this paper, we determine $\dern$ and $\adern$ for all
double-brooms. The {\it double-broom} $D_{m,n,p}$ with $p\geq 2$
is the tree with $m+n+p$ vertices obtained from a $p$-vertex
path appending $m$ leaf neighbors at one end and $n$ leaf
neighbors at the other end. When $p=2$, $D_{m,n,2}$ is also
named as {\it double-star}. By symmetry, we may assume $1\leq
m\leq n$. For $D_{m,n,p}$, we distinguish three types of edges.
A {\it leaf edge} is an edge incident to a leaf, a {\it middle
edge} is an edge with degree $2$ other than a leaf edge, a {\it
hub edge} is an edge with degree more than $2$ other than a leaf
edge. A {\it leaf decard}, {\it middle decard}, or {\it hub
decard} is a decard obtained by deleting a leaf, middle, or hub
edge, respectively.

We refer to the two kinds of leaf edge in $D_{m,n,p}$ as the
{\it left leaf edge}(with degree $m$) and the  {\it right leaf
edge}(with degree $n$). In the degenerate case $m=0$, we write
$D_{0,n,p}=D_{1,n,p-1}$ for $p>2$. We use $``+"$ to denote {\it
disjoint union}.  When $m,n\ge1$, the left and right leaf
decards are copies of $(D_{m-1,n,p}+K_1, m)$ and
$(D_{m,n-1,p}+K_1,n)$ respectively. The edge-card obtained by
deleting a middle edge or hub edge is a disjoint union of two
brooms, where the  broom $B_{m,a}$ is the tree with $m+a$
vertices obtained from a path with $a$ vertices by adding $m$
leaf neighbors to one endpoint of the path. Note that $B_{m,1}$
is a star $K_{1,m}$, and $B_{m,2}$ is a star $K_{1,m+1}$. The
middle decards of $D_{m,n,p}$ are expressions of the form
$(B_{m,a}+B_{n,p-a},2)$, where $1\leq a\leq p-1$. A middle
decard occurs twice in the dedeck of $D_{m,n,p}$ only if $m=n$,
otherwise once. The number of middle decards is $p-1$ minus the
number of the hub edges.

We prove some useful lemmas in Section $2$. Our main results are
given in Section $3$.

\section{Some Lemmas} \label{sec:results}

When we prove a set of decards $\cS$ determines the graph $G$,
the main idea is finding all possible reconstructions from a
decard in $\cS$. If the dedeck of a reconstruction other than
$G$ cannot contain $\cS$ as a subset, the set $\cS$ determines
the graph $G$. For convenience, an $i$-vertex is a vertex with
degree $i$ and a $j$-edge is an edge with degree $j$. We use
$C_l$ to denote a cycle with $l$ vertices.

\begin{lemma} [\cite{MSSW}]\label{leme1}
If a graph $G$ has an edge $e$ such that $d(e)=0$ or no two
non-adjacent vertices in $G-e$ other than the endpoints of $e$
have degree-sum $d(e)$, then the decard $(G-e,d(e))$ determines
$G$.
\end{lemma}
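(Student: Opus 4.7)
The plan is to characterize all possible reconstructions from the decard $(G-e,d(e))$ and show that the hypothesis forces them all to be isomorphic to $G$. A graph $H$ is a reconstruction from this decard precisely when $H=(G-e)+xy$ for some pair of non-adjacent vertices $x,y$ in $G-e$ such that the new edge $xy$ has degree $d(e)$ in $H$. A direct calculation of the degree of $xy$ in $H$ gives $(d_{G-e}(x)+1)+(d_{G-e}(y)+1)-2 = d_{G-e}(x)+d_{G-e}(y)$, so the condition on the pair $\{x,y\}$ is simply $d_{G-e}(x)+d_{G-e}(y)=d(e)$. Moreover, if $e=uv$ in $G$, then $\{u,v\}$ is a non-adjacent pair in $G-e$ satisfying this equation. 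So the proof reduces to arguing that no other non-adjacent pair can give a non-isomorphic reconstruction.

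First I would dispatch the case $d(e)=0$. Here both endpoints of $e$ are leaves adjacent only to each other, so in $G-e$ the vertices $u,v$ become isolated vertices. Any candidate pair $\{x,y\}$ with $d_{G-e}(x)+d_{G-e}(y)=0$ consists of two isolated vertices of $G-e$, and swapping any two isolated vertices is an automorphism. Hence every reconstruction is isomorphic to $(G-e)+uv=G$, regardless of how many isolated vertices $G-e$ contains.

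Next I would handle the case $d(e)\ge 1$ under the degree-sum hypothesis. The hypothesis says that for every non-adjacent pair $\{x,y\}\ne\{u,v\}$ in $G-e$ we have $d_{G-e}(x)+d_{G-e}(y)\ne d(e)$. Combined with the fact that pairs adjacent in $G-e$ cannot be used (they already carry an edge), the only admissible pair is $\{u,v\}$, and therefore the unique reconstruction is $G-e+uv=G$.

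The main point that requires care is not any computation but the correct translation between the unlabeled decard and the labeled reconstructions: one must verify that the hypothesis rules out all pairs of non-adjacent vertices giving a non-isomorphic graph, rather than just pairs whose unordered degree multiset matches. Once the degree-of-added-edge identity $d_{G-e}(x)+d_{G-e}(y)=d(e)$ is established, both cases follow immediately and no further obstacle remains.
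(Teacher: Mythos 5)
Your proof is correct. Note, however, that the paper does not prove this lemma at all: it is quoted verbatim from the cited reference \cite{MSSW}, so there is no in-paper argument to compare against. Your argument supplies exactly the reasoning that justifies the statement. The key step is the identity $d_H(xy)=d_{G-e}(x)+d_{G-e}(y)$ for an edge added between non-adjacent $x,y$ (valid because in a simple graph the edges adjacent to $xy$ split disjointly into those at $x$ and those at $y$), which correctly translates ``the added edge has degree $d(e)$'' into the degree-sum condition of the hypothesis. With that in hand, the case $d(e)\ge 1$ is immediate (the only admissible non-adjacent pair is the pair of endpoints of $e$, so the unique reconstruction is $G$), and the case $d(e)=0$ is handled correctly by observing that both endpoints are isolated in $G-e$ and that joining any two isolated vertices yields isomorphic graphs. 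The one point worth stating slightly more explicitly is that a ``reconstruction'' is defined only up to isomorphism of the edge-card, so one first fixes an isomorphism of the card with $G-e$ before enumerating admissible pairs; you do implicitly handle this, and no gap remains.
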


For convenience, we use $D_{m,n,p}^{s,t}$ to denote the graph
obtained from $D_{m,n,p}$ by subdividing a left leaf edge $s$
times and a right leaf edge $t$ times. Figure~\ref{f1} shows the
graph $D_{3,4,5}^{1,2}$.

\begin{figure}[h]
\begin{center}
\begin{pspicture}(1.5,2.0)(7.5,5)
\psset{radius=.09, xunit=1.5, yunit=1.4}

\Cnode(2,2.5){0} \Cnode(2.5,2.5){1}
\Cnode(3.0,2.5){2}\Cnode(3.5,2.5){3}\Cnode(4.0,2.5){4}

\Cnode(1,2.5){5} \Cnode(1,1.8){6}
\Cnode(1,3.27){8}\Cnode(1.5,2.9){7}

\Cnode(5,2.25){11} \Cnode(5,1.8){12}
\Cnode(5,3.26){13}\Cnode(5,2.7){14}

\Cnode(4.36,2.74){15}\Cnode(4.7,3.0){16}

 \ncline{4}{11}\ncline{4}{12}\ncline{4}{14}
 \ncline{15}{16}\ncline{13}{16}\ncline{15}{4}
\ncline{0}{1}\ncline{1}{2}\ncline{2}{3}\ncline{3}{4}\ncline{0}{5}
\ncline{0}{6}\ncline{0}{7}\ncline{7}{8}
\end{pspicture}
\caption{\label{f1}\footnotesize {The graph $D_{3,4,5}^{1,2}$.}}
\end{center}
\end{figure}
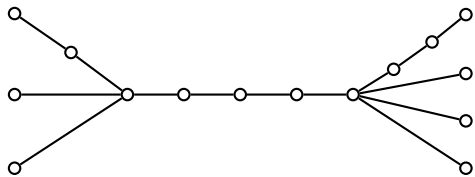

\begin{lemma} \label{hub}
If $p\ge4$, $m\geq 2$, $n\geq 3$, and $n\ne m+1$, then two hub
decards determine $D_{m,n,p}$ except $D_{2,n,5}$. If
$p\in\{2,3\}$, or $m=1, n\ge4$, or $m=1, n=3, p=4$, one hub
decard determines $D_{m,n,p}$.
\end{lemma}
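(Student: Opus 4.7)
Label the spine of $D_{m,n,p}$ as $v_1v_2\cdots v_p$, with $m$ leaves at $v_1$ and $n$ leaves at $v_p$. For $p\ge 3$ the two hub edges are $e_L=v_1v_2$ (of degree $m+1$) and $e_R=v_{p-1}v_p$ (of degree $n+1$), yielding hub decards
\[
(B_{m,1}+B_{n,p-1},\,m+1) \qquad \text{and} \qquad (B_{n,1}+B_{m,p-1},\,n+1);
\]
for $p=2$ the only hub decard is $(K_{1,m}+K_{1,n},\,m+n)$. Since every reconstruction from a decard $(H,d)$ is obtained by joining a non-adjacent pair $x,y$ in $H$ with $d_H(x)+d_H(y)=d$, the plan throughout is to enumerate such pairs and eliminate every candidate other than $D_{m,n,p}$.

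For the one-hub-decard regimes ($p\in\{2,3\}$; or $m=1,n\ge 4$; or $m=1,n=3,p=4$), the edge-card $H$ has few distinct degrees and I expect Lemma~\ref{leme1} to apply directly in most subcases. The residual coincidences --- for instance, in $D_{1,n,p}$ the added edge has degree $2$, which forces both $x$ and $y$ to have degree $1$ --- are dispatched by a short direct check showing that every admissible pair reconstructs the same graph $D_{m,n,p}$.

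For the generic two-decard range ($p\ge 4$, $m\ge 2$, $n\ge 3$, $n\ne m+1$, $(m,p)\ne(2,5)$), I would enumerate reconstructions from the left hub decard. The typical candidates are (a) joining the hub of $B_{m,1}$ to $v_2$, which returns $D_{m,n,p}$; (b) joining the hub of $B_{m,1}$ to a leaf at $v_p$, producing the tree $T_1=D_{m,n,3}^{0,p-3}$; and (c) for $m\in\{2,3\}$, pairs using interior path vertices of $B_{n,p-1}$, producing several further trees and some unicyclic graphs. For each $G'\ne D_{m,n,p}$ I list every edge of $G'$ with edge-degree $n+1$ and check its edge-card against $B_{n,1}+B_{m,p-1}$. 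Direct computation shows that the two degree-$(n+1)$ decards of $T_1$ are $(K_{1,m+1}+B_{n-1,p-1},\,n+1)$ and $(P_{p-2}+D_{m,n-1,3},\,n+1)$; the first coincides with the right hub decard of $D_{m,n,p}$ exactly when $n=m+1$ (explaining that hypothesis), while the second cannot coincide for $n\ge 3$ and $p\ge 4$. The candidates from (c) are dispatched similarly.

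The exception $D_{2,n,5}$ is witnessed by $G'=D_{2,n,3}^{2,0}$: using the identity $B_{n,4}\cong D_{1,n,3}$, deleting the internal edge $a_1w_1$ reproduces the left hub decard of $D_{2,n,5}$, and deleting the spine edge $a_2a_3$ reproduces its right hub decard because the Y-shape rooted at $a_1$ turns out to be isomorphic to $B_{2,4}$. The main obstacle is case (c): when $m\in\{2,3\}$, several pairs satisfy the degree-sum constraint, producing a range of candidate reconstructions (trees and unicyclic graphs) each of which must be checked individually to confirm that none of them reproduces both hub decards of $D_{m,n,p}$ outside the stated exception.
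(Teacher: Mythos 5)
Your strategy is the mirror image of the paper's: the paper reconstructs from the degree-$(n+1)$ hub decard $(B_{m,p-1}+K_{1,n},n+1)$ and tests each candidate against the degree-$(m+1)$ decard, whereas you reconstruct from $(K_{1,m}+B_{n,p-1},m+1)$ and test against the other. Your generic analysis is correct as far as it goes: for $m\geq 4$ the only alternative reconstruction is $T_1=D_{m,n,3}^{0,p-3}$, its two edges of degree $n+1$ yield the edge-cards you state, and the coincidence of $K_{1,m+1}+B_{n-1,p-1}$ with $K_{1,n}+B_{m,p-1}$ exactly when $n=m+1$ (for $p\geq 4$) correctly accounts for that hypothesis. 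Your witness $D_{2,n,3}^{2,0}$ for the exception $D_{2,n,5}$ also checks out.

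The genuine gap is your case (c). With your choice of starting decard, the degree-sum constraint $d_H(x)+d_H(y)=m+1$ acquires extra solutions precisely when $m\in\{2,3\}$, which, since $2\leq m\leq n$, covers a large portion of the lemma's range; these solutions produce a whole family of distinct caterpillars (one for each interior spine vertex of $B_{n,p-1}$ to which a leaf of $K_{1,2}$ can be attached) together with several unicyclic graphs. The exception $D_{2,n,5}$ itself arises inside this case, so the case cannot be waved through with ``dispatched similarly'': one must actually check that no other case-(c) candidate reproduces the second hub decard, and you concede yourself that this is ``the main obstacle.'' By contrast, the paper's choice of the degree-$(n+1)$ decard confines the non-generic pairs to $n=3$ (forcing $m=3$) and $n=m+2$, both of which yield only unicyclic candidates killed by one uniform argument (deleting any other cycle edge leaves a component, such as $K_{1,m+2}$, that cannot occur in the other hub edge-card), while the exceptional graph surfaces in the clean generic tree case. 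Your route is workable in principle, but it front-loads the hardest and most case-heavy verification into exactly the part you have not written, so as it stands the proof is incomplete.
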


\begin{proof}
Lemma~\ref{leme1} implies the second conclusion is true.  Assume
$p\geq 3$ and $m\ge2$ in the following. We are considering the
two hub decards $(K_{1,m}+B_{n,p-1},m+1)$ and
$(B_{m,p-1}+K_{1,n},n+1)$.

Let $G$ be a graph that arises from $B_{m,p-1}+K_{1,n}$ by
adding an edge $e$ of degree $n+1$.  Note that there are one
$m+1$-vertex and one $n$-vertex in $B_{m,p-1}+K_{1,n}$.

If $n\notin \{3, m+1,m+2\}$, then $G$ is obtained by adding an
edge $e$ joining the $n$-vertex in $K_{1,n}$ and a $1$-vertex in
$B_{m,p-1}$. If $G\ncong D_{m,n,p}$, then $p\geq 4$ and $G\cong
D_{m,n,3}^{p-3,0}$. But deleting an $(m+1)$-edge other than $e$
from $G$ cannot arise $K_{1,m}$ unless $m=2$ and $p=5$. Note
that  $D_{2,n,3}^{2,0}$ and $D_{2,n,5}$ share the decards
$(K_{1,2}+B_{n,4},3)$ and $(B_{2,4}+K_{1,n},n+1)$.

If $n=m+2$ and $G$ is obtained from $B_{m,p-1}+K_{1,m+2}$ by
adding an edge $e$ joining the $(m+1)$-vertex and a $2$-vertex
in $B_{m,p-1}$, then $G$ has a cycle $C$  and $p\geq 5$. The
graph obtained by deleting any edge $e'$ on $C$ other than $e$
from $G$ has $K_{1,m+2}$ as a component, which cannot happen in
the edge-card $K_{1,m}+B_{n,p-1}$.

If $m=n=3$ and $G$ arises from $B_{3,p-1}+K_{1,3}$ by adding an
edge $e$ joining two non-adjacent $2$-vertices, then $p\geq6$,
and $G$ has a cycle $C$ and the degree of any edge other than
$e$ on the cycle $C$ is less than $4$. Hence, $G$ shares only
one hub decard with $D_{3,3,p}$.
\end{proof}

\begin{lemma} \label{middle}
Any two distinct middle decards determine $D_{m,n,p}$ except
$1=m<n$ and $p\ge6$. In the special case three middle decards
determine it.
\end{lemma}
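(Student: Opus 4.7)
The plan is to fix one middle decard $(B_{m,a}+B_{n,p-a},2)$ of $D_{m,n,p}$ and enumerate every graph $G$ that could produce it, then bound how many other middle decards of $D_{m,n,p}$ such a $G$ can simultaneously carry. Since the reinserted edge must have degree $2$ and no vertex of the edge-card is isolated, both endpoints of that edge must be leaves of $B_{m,a}+B_{n,p-a}$. Up to automorphism, the leaves of $B_{m,a}$ split into the $m$ hub-leaves attached to the hub $v_1$ and, when $a\ge 3$, the single path-end leaf $v_a$; these classes collapse whenever $a\le 2$, and likewise for $B_{n,p-a}$. Pairing one leaf from each broom yields up to four connected tree reconstructions: the target $D_{m,n,p}$ itself, and three caterpillars $G^1_a, G^2_a, G^3_a$ corresponding to (path-end, hub-leaf), (hub-leaf, path-end), and (hub-leaf, hub-leaf) pairings. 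Pairing two leaves inside a single broom instead gives a disconnected reconstruction carrying exactly one cycle.

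For $m\ge 2$ I would show that each of $G^1_a, G^2_a, G^3_a$ shares only the decard $e_a=(B_{m,a}+B_{n,p-a},2)$ with the middle decards of $D_{m,n,p}$. The key is an absorption observation: a degree-$(k+1)$ internal vertex of one of these caterpillars that carries $k-1$ hub-leaves together with a length-$1$ path-branch can be re-read as the hub of a broom $B_{k,\gamma}$ by absorbing the length-$1$ branch into the leaves, but only when the other path-branch at that vertex is nontrivial. This pins down the unique middle edge whose removal splits the caterpillar into two brooms, and that decard is $e_a$. A disconnected reconstruction $H$ carries a single cycle, and for $H-e'$ to be a disjoint union of two brooms, $e'$ must break the cycle; a parallel absorption argument then forces the resulting decard to equal $e_a$ again. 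Hence, for $m\ge 2$, any two distinct middle decards eliminate every non-target reconstruction.

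The genuine obstruction is the case $m=1$, where $B_{1,a}=P_{a+1}$ is a symmetric path: then $G^2_a$ coincides with $D_{1,n,p}$ itself and $G^1_a=G^3_a$ collapses to a single caterpillar $G^*_a$, the path $P_{p+2}$ with $n-1$ leaves attached at position $a+3$. Now the absorption argument succeeds on both sides of the leaf-bundle: removing the first middle edge to the left of the hub (at edge position $a+1$) produces the two components $P_{a+1}$ and $B_{n,p-a}$, giving the decard $e_a$, while removing the first middle edge to the right of the hub (at edge position $a+4$) absorbs a length-$1$ branch on the far side of the hub and yields $(B_{n,a+3}+P_{p-a-2},2)=e_{p-a-3}$. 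Thus $G^*_a$ shares exactly the two middle decards $\{e_a, e_{p-a-3}\}$ with $D_{1,n,p}$, and these are two distinct indices both lying in $\{1,\ldots,p-2\}$ precisely when $a\in\{1,\ldots,p-4\}$ and $a\ne(p-3)/2$, an event that first occurs at $p=6$. Any pair of middle decards of $D_{1,n,p}$ not of the form $\{e_a,e_{p-a-3}\}$ therefore still determines $D_{1,n,p}$ by the same argument.

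The three-decard claim in the exception is then immediate, since each $G^*_a$ matches at most the two-element set $\{e_a, e_{p-a-3}\}$ and each disconnected reconstruction matches at most one middle decard, so no three distinct middle decards can all lie in the dedeck of any non-target graph. The main obstacle I foresee is the structural verification behind the absorption argument in the second paragraph: making it precise and uniform across the three families $G^1_a, G^2_a, G^3_a$ while handling the boundary cases $a\in\{2,3\}$, $p-a\in\{2,3\}$, or $m\in\{1,2\}$, where the hub-leaf and path-end classes degenerate and two of the $G^i_a$'s coincide with each other or with $D_{m,n,p}$.
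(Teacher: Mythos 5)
Your proposal is correct and follows essentially the same route as the paper: reconstruct from a middle edge-card by rejoining two $1$-vertices, split into the within-component (cycle) case and the cross-component case classified by which leaf types are joined, identify the same non-target reconstructions ($D_{m,n,4}^{a-2,p-a-2}$, $D_{m,n,a+2}^{0,p-a-2}$, $D_{m,n,p-a-2}^{a-2,0}$ for $m\ge 2$, and the single caterpillar $D_{1,n,a+2}^{0,p-a-2}$ for $m=1$), and bound the shared middle decards. Your extra observation that the only failing pairs are exactly $\{e_a,e_{p-a-3}\}$ is consistent with (and slightly sharper than) the paper's count, and the ``absorption'' step you flag as the remaining work is precisely the paper's verification that deleting any other $2$-edge leaves at most one broom component.
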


\begin{proof}
Assume two different middle decards are $(B_{m,a}+B_{n,p-a},2)$
and $(B_{m,b}+B_{n,p-b},2)$ with $a< b$. Any component of
$B_{m,a}+B_{n,p-a}$ is different with any component of
$B_{m,b}+B_{n,p-b}$.  Let $G$ be the graph arises from edge-card
$B_{m,b}+B_{n,p-b}$ by adding an edge $e$ joining two
non-adjacent $1$-vertices.

If $e$  joins two $1$-vertices in one component, then $G$ has a
cycle $C$. The graph obtained by deleting any edge $e'$ on $C$
other than $e$ from $G$ has either $B_{m,b}$ or $B_{n,p-b}$ as a
component which is not a component of $B_{m,a}+B_{n,p-a}$.
Hence, $G$ shares only one middle decard with $D_{m,n,p}$.

Assume $e$ joins two $1$-vertices in two components
respectively. We discuss the cases according to $m$.

{\bf Case 1:}  {\it $m=1$.}

If $n=1$ or $n>1$ and $p-b=2$, then the reconstruction $G$ is
$D_{1,n,p}$. Hence we assume $n>1$ and $p-b>2$ in the following.
If $p=5$ and $G\ncong D_{1,n,p}$, then $G\cong D_{1,n,4}^{0,1}$.
The graph obtained by deleting any $2$-edge $e'$ other than $e$
from $G$ has only one broom component. Hence $G$ and $D_{1,n,5}$
shares one middle decard. If $p\geq 6$ and $G\ncong D_{1,n,p}$,
then $G\cong D_{1,n,b+2}^{0,p-b-2}$. If $p-b=3$, the graph
obtained by deleting any $2$-edge $e'$ other than $e$ from $G$
has only one broom component. Hence, $G$ and $D_{1,n,p}$ shares
one middle decard. Assume $p-b\geq 4$. Let $e'$ be another
$2$-edge adjacent to another $(n+1)$-edge. Moreover, $e'$ is not
a leaf edge when $n=2$. Then $G-e'\cong B_{1,p-b-3}+B_{n,b+3}$.
The graph obtained by deleting any $2$-edge $e''$ other than $e$
and $e'$ from $G$ has only one broom component. Hence, $G$ and
$D_{1,n,p}$ shares at most two middle decards.

{\bf Case 2:}  {\it $m\geq 2$.}

If $G\ncong D_{m,n,p}$, $G$ has three possibilities
$D_{m,n,4}^{b-2,p-b-2}$,  $D_{m,n,b+2}^{0,p-b-2}$ or
$D_{m,n,p-b-2}^{b-2,0}$. The graph obtained by deleting any
$2$-edge $e'$ other than $e$ from $G$ has only one broom
component. Hence, $G$ shares one middle decard with $D_{m,n,p}$.
\end{proof}

\begin{lemma} \label{oneleaf}
Any two distinct leaf decards determine $D_{m,n,p}$ except
$D_{1,2,p}$($p\geq 3$), and any three leaf decards suffice except
$D_{m,m+2,3}$.
\end{lemma}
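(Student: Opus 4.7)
The plan is to work with the two distinct leaf decards---necessarily one left decard $(D_{m-1,n,p}+K_1,m)$ and one right decard $(D_{m,n-1,p}+K_1,n)$---and to enumerate all possible reconstructions from each. A first observation is that any graph $G$ sharing both decards can have no isolated vertex: a leaf-edge deletion from a graph with an isolated vertex would produce an edge-card with two $K_1$ components, whereas the given decards show only one. Hence in each decard the missing edge must join the $K_1$ to a vertex of the non-trivial component, and the edge-degree forces that vertex to have degree exactly $m$ in $D_{m-1,n,p}$ (respectively $n$ in $D_{m,n-1,p}$).

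Next, I enumerate the vertices of degree $m$ in $D_{m-1,n,p}$. The left hub always has degree $m$ and produces the intended reconstruction $D_{m,n,p}$. Alternative candidates arise only when degrees coincide: a path-interior vertex has degree $m$ only when $m=2$ (and $p\ge 4$), and a leaf has degree $m$ only when $m=1$, in which case $D_{0,n,p}=B_{n,p}$ offers several degree-$1$ candidates. For each non-canonical candidate $H$, I check whether $(D_{m,n-1,p}+K_1,n)$ lies in $H$'s dedeck; a local comparison around $H$'s unique vertex of degree $m+1$ (or a direct tree isomorphism check in the $m=1$ case) rules out $H$ in every situation except precisely $m=1$, $n=2$, where attaching $K_1$ to a hub-leaf of $B_{2,p}$ gives a spider whose single degree-$2$ leaf edge also deletes to $(P_{p+2}+K_1,2)$. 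This yields the announced exception $D_{1,2,p}$.

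Finally, I handle the two three-decard claims. For $D_{1,2,p}$, the three leaf decards exhaust the leaf multiset (one of degree $1$, two of degree $2$); the spider above contains only one copy of $(P_{p+2}+K_1,2)$, and the only unicyclic alternative $C_{p+2}+K_1$ lacks $(B_{2,p}+K_1,1)$ entirely, so three leaf decards still determine $D_{1,2,p}$. For the exception $D_{m,m+2,3}$, I will exhibit the disconnected graph $G'=U+K_1$, where $U$ is obtained from $D_{m,m+1,3}$ by adding an edge from a right leaf to the left hub, creating a $4$-cycle. Using $n=m+2$, a short computation shows each of the four cycle edges has degree $m+2=n$ and deletes to $D_{m,m+1,3}+K_1$, so $G'$ shares four---in particular three---copies of the right decard with $D_{m,m+2,3}$. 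The main obstacle will be the bookkeeping for small $m$ or small $p$, where multiple vertex degrees coincide and a few spurious reconstructions must be individually excluded by isomorphism checks on small trees.
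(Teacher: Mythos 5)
Your opening reduction has a genuine gap that undermines most of what follows. You argue that a reconstruction $G$ sharing a leaf decard cannot have an isolated vertex because ``a leaf-edge deletion from a graph with an isolated vertex would produce an edge-card with two $K_1$ components.'' But the edge of $G$ realizing the decard $(D_{m-1,n,p}+K_1,m)$ need not be a pendant edge of $G$: it can lie on a cycle, in which case its deletion creates no new isolated vertex and $G$ can perfectly well be a unicyclic graph plus $K_1$. Your own exceptional graph $G'=U+K_1$ for $D_{m,m+2,3}$ is exactly of this form --- it has an isolated vertex and still carries four right leaf decards --- so the ``first observation'' is contradicted by your own example. Consequently the step ``the missing edge must join the $K_1$ to a vertex of the non-trivial component'' silently discards all reconstructions obtained by adding an edge between two vertices of the double-broom component (creating a cycle and leaving the $K_1$ alone); these cyclic reconstructions are precisely what the paper's cases $d(\hat e)=2,3,4$ and $d(\hat e)\ge5$ are devoted to, and they include $C_{p+2}+K_1$ for $D_{1,2,p}$ and $G_1+K_1$ for $D_{m,m+2,3}$. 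For the two-\emph{distinct}-decards claim the conclusion can in fact be rescued (a unicyclic $U$ with $U-e_1\cong D_{m-1,n,p}$, $d(e_1)=m$, and $U-e_2\cong D_{m,n-1,p}$, $d(e_2)=n$, is impossible for $m<n$ by comparing maximum degrees with the required edge-degrees), but that argument is not in your proposal and must be supplied.

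The larger omission is in the three-decard claim. Three leaf decards of $D_{m,n,p}$ may all be identical --- this is the only possibility when $m=n$, and it is available whenever $m\ge3$ or $n\ge3$ --- and in that situation your left-plus-right framework says nothing. To prove that three leaf decards suffice outside $D_{m,m+2,3}$ you must show that every spurious reconstruction (in particular every cyclic one) shares at most two \emph{identical} leaf decards with $D_{m,n,p}$; this is exactly the counting the paper carries out case by case on $d(\hat e)$ (e.g.\ showing the analogue of $G_1+K_1$ shares only two right leaf decards once $p\ne3$), and it is absent from your plan except for the two special families $D_{1,2,p}$ and $D_{m,m+2,3}$. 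Your identifications of the two exceptions themselves are correct and agree with the paper's, but the argument that nothing else goes wrong is missing.
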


\begin{proof}
Let $\hat{e}$ be a leaf edge of $D_{m,n,p}$ and
$H=D_{m,n,p}-\hat{e}$. Then $H$ contains a double broom
component $H'$ and an isolated vertex. The diameter of $H'$ is
$p+1$ when $d(\hat{e})\geq 2$, and $p$ when $d(\hat{e})=1$. We
reconstruct $G$ from the leaf card $H$.

If $m\geq 5$, or $m=4$ and $p\leq 4$, or $m\in \{1,3\}$ and
$p=2$, or $m=n=1$,  Lemma~\ref{leme1} implies that a left leaf
decard determines $D_{m,n,p}$.

If $n\geq 5$ and $m\notin \{n-2,n-3\}$, or $m+3=n\geq 5$ and
$p\leq 3$, or $m+1=n=4$ and $p\leq 4$, or $n=4, m=1$ and $p\leq
3$, or $m+1=n=3$ and $p=2$, Lemma~\ref{leme1} implies that a
right leaf decard determines $D_{m,n,p}$.

If $n=m+2$ and $p=3$, the right leaf decard is
$(D_{m,m+1,3}+K_1,m+2)$. Let $G_1$ be the graph obtained from a
cycle $C_4$ by appending $m$-vertices to two non-adjacent
vertices respectively(see Fig.\ref{f2}). The degree of any edge
on the cycle is $m+2$. Let $G'=G_1+K_1$ and the edger-card
obtained by deleting any edge on the cycle from $G'$ is
isomorphism to $D_{m,m+1,3}+K_1$. Then $G'$ shares
$\min\{4,m+2\}$ right leaf decards with $D_{m,m+2,3}$. Hence,
three leaf decards cannot determine $D_{m,m+2,3}$.

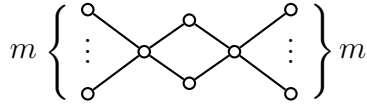
\begin{figure}[h]
\begin{center}
\begin{pspicture}(5.5,1.6)(6.5,3)
\psset{radius=.09, xunit=1.5, yunit=1.4}

\Cnode(4,1.2){0} \Cnode(4,1.8){1}
\Cnode(3.6,1.5){2}\Cnode(4.4,1.5){3}

\Cnode(3.1,1.9){4}\Cnode(3.1,1.1){5}
\Cnode(4.9,1.9){6}\Cnode(4.9,1.1){7}

\rput(3.1,1.58){$\vdots$}\rput(4.9,1.58){$\vdots$}
\rput(5.45,1.5){$m$}\rput(2.53,1.5){$m$}

\rput(5.18,1.5){$\Bigg\}$}\rput(2.83,1.49){$\Bigg\{$}

\ncline{0}{2} \ncline{0}{3} \ncline{1}{2} \ncline{1}{3}
\ncline{4}{2}\ncline{5}{2}\ncline{6}{3}\ncline{7}{3}
\end{pspicture}
\caption{\label{f2}\footnotesize {The graph $G_1$.}}
\end{center}
\end{figure}

Otherwise, we consider the following cases according to $d(\hat e)$.

If $d(\hat{e})=1$, then $m=1$, $n\geq 2$ and $p\geq 3$.  The
graph $G$ is obtained from $H$ by adding an edge $e$ joining a
$1$-vertex and an isolated vertex. If $G\ncong D_{1,n,p}$, then
$G\cong D_{1,n,p-1}^{0,1}$. If $n\geq3$, the graph obtained from
$G$ by deleting any edge $e'$ of degree $n$ from $G$ has no
double-broom component. Then $G$ shares only one leaf decard
with $D_{1,n,p}$. If $n=2$, let $e'$ be the $2$-edge incident to
a leaf, then $G-e'\cong D_{1,1,p}+K_{1}$. The graph obtained
from $G$ by deleting any edge $e''$ other than $e$ and $e'$ from
$G$ cannot contain both a double-broom component and an isolated
vertex. Hence, $G$ shares exactly two distinct leaf decards with
$D_{1,2,p}$.

If $d(\hat{e})=2$, then $m=2$ or $n=2$. The graph $G$ arises
from $H$ by joining two non-adjacent vertices whose degree sum
is $2$. Then $G$ is obtained from $H$ by adding an edge $e$
joining two $1$-vertices or a $2$-vertex and an isolated vertex.
If $e$ joins two $1$-vertices, $G$ has a cycle $C_l$. If $m=1$
and $n=2$, $G\cong C_{p+2}+K_1$. In this case $G$ shares two
right leaf decards with $D_{1,2,p}$. If $m=2$ and $l\geq 4$, let
$e'$ be another edge adjacent to another $(n+1)$-edge on the
cycle $C$, then $G-e'\cong H$. The graph obtained by deleting
any edge $e''$ other than $e$ and $e'$ on the cycle has no
double-broom component. Hence $G$ shares exactly two left leaf
decards with $D_{2,n,p}$. If $m=2$ and $l=3$, the degree of any
edge on the cycle other than $e$ is $n+1$. Hence $G$ shares one
leaf decard with $D_{2,n,p}$. Assume $e$ joins a $2$-vertex and
an isolated vertex, and  $G\ncong D_{m,n,p}$. If $G\cong
D_{1,2,p-1}^{0,1}$, then $G$ shares exactly two distinct leaf
decards with $D_{1,2,p}$ as discussed above. If $G\ncong
D_{1,2,p-1}^{0,1}$, the graph obtained by deleting any edge $e'$
of degree $m$ or $n$ cannot contain both a double-broom
component and  an isolated vertex. Then $G$ shares only one leaf
decards with $D_{m,n,p}$.

If $d(\hat{e})=3$, the graph $G$ arises from $H$ by joining a
$1$-vertex $u$ and a $2$-vertex $v$. If $G\ncong D_{m,n,p}$,
then $G$ has a cycle $C_l$. Assume the degree of any vertex on
$C_l$ other than $v$ is $2$ in $G$. Let $e'$ be another edge
other than $e$ incident to $v$ on the cycle $C_l$. Then
$G-e'\cong H$. The graph obtained by deleting any edge $e''$
other than $e$ and $e'$ on the $C_l$ from $G$ has no
double-broom component or a double-broom component with diameter
$p$. Then $G$ shares exactly two identical leaf decards with
$D_{m,n,p}$. Assume there are two vertices with degree at least
$3$ on $C_l$ in $G$.  Suppose $l=4$, let $e'$ be the other edge
incident to $v$ on the cycle $C_l$. Then $G-e'\cong H$. The
graph obtained by deleting any edge $e''$ other than $e$ and
$e'$ on the $C_l$ from $G$ has no double-broom component. Then
$G$ shares exactly two identical leaf decards with $D_{m,n,p}$.
If $l=p+1$, then $m=1$ and $n=3$. Assume $p\neq 3$. Let $e'$ be
the other edge incident to $u$ on the cycle $C_l$. Then
$G-e'\cong H$. The graph obtained by deleting any edge $e''$
other than $e$ and $e'$ on the $C_l$ from $G$ has no
double-broom component or a double-broom component with diameter
$p+2$ or $p$. Then $G$ shares exactly two right leaf decards
with $D_{1,3,p}$. If $l\notin \{4,p+1\}$, the graph obtained by
deleting any edge $e'$ other than $e$ on the $C_l$ from $G$ has
no double-broom component or a double-broom component with
diameter $p+2$. Then $G$ shares only one leaf decard with
$D_{m,n,p}$.

If $d(\hat{e})=4$, the graph $G$ arises from $H$ by joining two
non-adjacent vertices whose degree sum is $4$. If $G\ncong
D_{m,n,p}$, then $G$ is obtained from $H$ by adding an edge $e$
joining two non-adjacent $2$-vertices or a $1$-vertex $u$ and a
$3$-vertex. Hence $G$ has a cycle $C$. If $e$ joins two two
non-adjacent $2$-vertices, the graph obtained by deleting any
edge $e'$ other than $e$ on the $C$ from $G$ has no double-broom
component or has a double-broom component with diameter $p$ when
$m=1$. Then $G$ shares one leaf decard with $D_{m,n,p}$. If $e$
joins a $1$-vertex $u$ and a $3$-vertex, then $m=2,n=4$. Assume
$p\neq 3$. Let $e'$ be the other edge incident to $u$ on the
cycle $C$, then $G-e'\cong H$. The graph obtained by deleting
any edge $e''$ other than $e$ and $e'$ on the $C$ from $G$ has
no double-broom component or  a double-broom component with
diameter $p+2$ or $p$. Then $G$ shares exactly two leaf decards
with $D_{2,4,p}$.

If $5\leq d(\hat{e})=m+2$ and $p\neq 3$, the graph $G$ arises
from $H$ by joining two non-adjacent vertices whose degree sum
is $m+2$. If $G\ncong D_{m,m+2,p}$, then $G$ is obtained from
$H$ by adding an edge $e$ joining a $1$-vertex $u$ and the
$(m+1)$-vertex. Then $G$ has a cycle $C$. Let $e'$ be another
edge incident to $u$ on the cycle $C$, then $G-e'\cong H$. The
graph obtained by deleting any edge $e''$ other than $e$ and
$e'$ on the $C$ from $G$ has no double-broom component or a
double-broom component with diameter $p+2$ or $p$. Then $G$
shares exactly two right leaf decards with $D_{m,m+2,p}$.

If $5\leq d(\hat{e})=m+3$ and $p\geq 4$, the graph $G$ arises
from $H$ by joining two non-adjacent vertices whose degree sum
is $m+3$. If $G\ncong D_{m,m+3,p}$, then  $G$ is obtained from
$H$ by adding an edge $e$ joining  a $2$-vertex and the
$(m+1)$-vertex. Then $G$ has a cycle $C$. The graph obtained by
deleting any edge $e'$ other than $e$ on the cycle $C$ from $G$
has no double-broom component or a double-broom component with
diameter $p$. Then $G$ shares only one right leaf decards with
$D_{m,n,p}$.
\end{proof}

\begin{lemma}\label{leafhub}
A leaf decard and a hub decard determine $D_{m,n,p}$ except
$p\geq 4$, $m=2$ or $n=2$.
\end{lemma}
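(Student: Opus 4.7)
The plan is to anchor on the hub decard $\mathcal{H}$, enumerate the graphs whose dedeck contains $\mathcal{H}$, and then eliminate wrong candidates using the leaf decard $\mathcal{L}$. Several parameter ranges are already covered by earlier lemmas: Lemma~\ref{hub} handles $p \in \{2,3\}$, or $m = 1$ with $n \ge 4$, or $(m,n,p) = (1,3,4)$, and Lemma~\ref{leme1} applies to $\mathcal{L}$ whenever no two non-adjacent non-endpoint vertices of its edge-card have degree-sum equal to the deleted edge's degree. I would begin by disposing of these easy ranges, leaving essentially $p \ge 4$, $m \ge 2$, plus a few residual small-parameter subcases.

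For the remaining cases I take, up to symmetry, $\mathcal{H} = (B_{m,p-1} + K_{1,n},\, n+1)$ and list all $G' \ncong D_{m,n,p}$ whose dedeck contains it. Following the case split in the proof of Lemma~\ref{hub} on whether $n \in \{3, m+1, m+2\}$ or $m = n = 3$, the tree alternatives reduce (up to the symmetric swap of sides) to subdivided double-brooms of the shape $D_{m',n',3}^{p-3,0}$, with $(m',n') = (m,n)$ generically and with an additional variant $(m{+}1,n{-}1)$ when $n = m+1$; the non-tree alternatives are the explicit disconnected cycle-containing graphs highlighted in that proof. For each such $G'$ I would then check that $\mathcal{L}$ is not in its dedeck: in the non-tree cases, deleting any leaf edge of $G'$ yields an edge-card with at least three connected components, whereas every leaf edge-card of $D_{m,n,p}$ has exactly two components (a double-broom together with an isolated vertex); in the tree case $G' = D_{m,n,3}^{p-3,0}$ with $m \ge 3$, the branch vertex at the original left hub retains $m-2 \ge 1$ unsubdivided leaf neighbors after any single leaf deletion, so its leaf edge-cards are distinguished from those of $D_{m,n,p}$ by a leaf-neighbor count at the unique degree-$m$ vertex. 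The analogous count handles the variant alternatives in the $n \in \{m+1,m+2\}$ cases.

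The asserted exception arises exactly when $m = 2$ and $p \ge 4$ (and symmetrically when $n = 2$): at $m = 2$ the alternative $G' = D_{2,n,3}^{p-3,0}$ has only one unsubdivided left-leaf edge, of degree $2$, whose deletion collapses the branch vertex $u_1$ into the path and yields the edge-card $B_{n,p+1} + K_1 = D_{1,n,p} + K_1$, identical to the left-leaf edge-card of $D_{2,n,p}$. Together with the shared hub decard $(B_{2,p-1} + K_{1,n},\, n+1)$ already recorded inside the proof of Lemma~\ref{hub}, this produces two graphs whose dedecks both contain the pair $\{\mathcal{L}, \mathcal{H}\}$, confirming the exception. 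The main obstacle that I foresee is the small-parameter bookkeeping forced by the degenerate equalities $D_{0,n,p} = D_{1,n,p-1}$, $B_{m,1} = K_{1,m}$, and $B_{m,2} = K_{1,m+1}$: these create coincidences that need case-by-case inspection, most notably for $(m,n) \in \{(1,2),(1,3)\}$ and along the diagonals $n = m+1$ and $n = m+2$, where the generic leaf-neighbor count argument has to be replaced by a direct comparison of decards.
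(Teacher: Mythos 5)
Your strategy is sound and you land on the correct exception with the correct witness ($D_{2,n,3}^{p-3,0}$ sharing both the left leaf decard $(D_{1,n,p}+K_1,2)$ and the hub decard $(B_{2,p-1}+K_{1,n},n+1)$ with $D_{2,n,p}$), but you anchor on the opposite decard from the paper, and this makes your route considerably heavier. The paper reconstructs from the \emph{leaf} edge-card $H=D_{m,n,p}-\hat e$, which contains an isolated vertex, and exploits a single invariant: when $d(\hat e)\ge 3$, any reconstruction $G\ncong D_{m,n,p}$ keeps the isolated vertex (and gains a cycle), whereas every hub edge-card $K_{1,m}+B_{n,p-1}$ or $B_{m,p-1}+K_{1,n}$ has no isolated vertex --- so $G$ shares no hub decard, with no enumeration at all. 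That leaves only $d(\hat e)=2$ (which is exactly the $m=2$ or $n=2$ exception, reduced to Lemma~\ref{hub} when $p\le 3$) and $d(\hat e)=1$ (reduced to $m=1,n=3,p\ge5$, where $G\cong D_{1,3,p-1}^{0,1}$ has no decard with a $K_{1,3}$ component). Your hub-anchored version instead requires the full list of reconstructions from the hub edge-card; note that the proof of Lemma~\ref{hub} you want to reuse explicitly assumes $n\ne m+1$ and has its own exceptional graph, so the diagonal cases $n\in\{m+1,m+2\}$ and the small parameters you defer to ``bookkeeping'' genuinely have to be redone rather than quoted, and the elimination of the tree candidate $D_{m,n,3}^{p-3,0}$ via the right leaf decard needs a different invariant from your leaf-neighbour count (e.g.\ the distance between the two branch vertices), since deleting a right leaf leaves the same number of leaf neighbours at the degree-$n$ vertex in both graphs. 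None of this is a fatal gap --- your component-count argument for the unicyclic candidates and your analysis of the $m=2$ collapse are correct --- but the paper's isolated-vertex observation buys in one line what your approach buys only after completing Lemma~\ref{hub}'s enumeration in the ranges that lemma excludes.
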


\begin{proof}
Let $\hat{e}$ be a leaf edge of $D_{m,n,p}$ and
$H=D_{m,n,p}-\hat{e}$. Then $H$ contains an isolated vertex. We
reconstruct $G$ from the leaf card $H$.

If $d(\hat{e})\geq 3$ and $G\ncong D_{m,n,p}$, then $G$ has a
cycle and an isolated vertex. Since any hub edge-card has no
isolated vertex, $G$ shares no hub decard with $D_{m,n,p}$. If
$d(\hat{e})=2$, then $m=2$ or $n=2$. By the assumption, $p\leq
3$. By Lemma~\ref{hub}, one hub decard is sufficient. If
$d(\hat{e})=1$, by Lemma~\ref{hub} and the proof of
Lemma~\ref{oneleaf}, we only need consider $m=1,n=3,p\geq5$. If
$G\ncong D_{1,3,p}$, then $G\cong D_{1,3,p-1}^{0,1}$.  The graph
obtained by deleting any edge from $G$ cannot have $K_{1,3}$
component. Since the hub decard is $(B_{1,p-1}+K_{1,3},4)$,  $G$
shares no hub decard with $D_{1,3,p}$.
\end{proof}

\begin{lemma}\label{leafmiddle}
A leaf decard and a middle decard determine $D_{m,n,p}$, except
$m=2, p\geq 5$ and $m=1, n\geq 2, p\geq 4$.
\end{lemma}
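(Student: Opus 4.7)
The plan is to mirror the approach of Lemma~\ref{oneleaf}: from the given leaf decard $(H, d(\hat e))$ I will enumerate all candidate reconstructions $G$, and then use the given middle decard to sieve out all but $D_{m,n,p}$ (or, in the exceptional parameter ranges, to exhibit a stray candidate that shares both decards). Here $\hat e$ is the deleted leaf edge, so $H$ is the disjoint union of an isolated vertex and a double-broom main component.

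My first step is a pruning observation: every middle decard of $D_{m,n,p}$ is $(B_{m,a}+B_{n,p-a},2)$, a disjoint union of two non-trivial brooms with no isolated vertex and no cycle. Hence any candidate $G$ whose added edge $e$ (satisfying $G-e=H$) avoids the isolated vertex of $H$ retains that isolated vertex as a permanent component of every decard, and any candidate $G$ that contains a cycle retains a connected cycle-broken remnant after the deletion of a degree-$2$ edge; in either situation $G$ cannot contribute the middle decard. So I need only analyze the tree candidates obtained by joining the isolated vertex of $H$ to a vertex $u$ of the main component with $\deg_H(u) = d(\hat e)$.

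Next I will split on $d(\hat e)$. When $d(\hat e)\ge 3$, the only vertices of the main component with degree at least $3$ are the two hubs $v_1$ and $v_p$, and joining the isolated vertex to either of them recovers $D_{m,n,p}$, so the pair determines. When $d(\hat e)=2$ (so $m=2$ or $n=2$), the tree alternatives are the caterpillars $T_j$ obtained by grafting the isolated vertex as a leaf onto an interior path vertex $v_j$; I will show that deleting the middle edge $v_{j+1}v_{j+2}$ of $T_j$ (valid exactly when $2\le j\le p-3$) yields components $B_{2,j+1}$ and $B_{n,p-j-1}$, since the two leaves at $v_j$ are the grafted leaf and the newly-exposed $v_{j+1}$. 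This matches the $a=j+1$ middle decard of $D_{2,n,p}$, and such $j$ exist iff $p\ge 5$, yielding the $m=2,\,p\ge 5$ exception. When $d(\hat e)=1$ (forcing $m=1$), the only tree alternative is $G\cong D_{1,n,p-1}^{0,1}$, and deleting its middle edge $v_{p-3}v_{p-2}$ yields components $B_{1,p-3}$ and $B_{n,3}$, matching the $a=p-3$ middle decard of $D_{1,n,p}$. This requires $p\ge 4$ and $n\ge 2$, producing the $m=1,\,n\ge 2,\,p\ge 4$ exception.

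The main obstacle will be the negative direction: for non-exceptional parameter ranges, verifying that no middle decard of any alternative candidate $G$ matches any middle decard of $D_{m,n,p}$. This reduces to the structural observation that, other than the special edge identified in each case above, deleting any middle edge of $T_j$ or of $D_{1,n,p-1}^{0,1}$ leaves at least one component carrying a leaf attached at an interior vertex of its longest path, hence not a broom. I expect this check to be routine but notationally heavy, following the pattern of component identification already used in Lemma~\ref{middle}.
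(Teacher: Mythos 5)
Your proposal is correct and follows essentially the same route as the paper: both prune to tree candidates via the isolated-vertex observation (a middle edge-card has no isolated vertex and no cycle remnant), split on $d(\hat e)$, and identify the caterpillar alternatives ($T_j$, resp.\ $D_{1,n,p-1}^{0,1}$) before checking which of their degree-$2$ edges can reproduce a $B_{m,a}+B_{n,p-a}$ card. Your version is somewhat more systematic in that it pinpoints the unique matching middle edge in each alternative and thereby also certifies that the two exceptional ranges are genuine, which the paper's proof of this lemma does not do explicitly; the deferred ``routine'' negative checks are exactly the component-identification arguments the paper carries out.
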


\begin{proof}
Let $\hat{e}$ be a leaf edge of $D_{m,n,p}$ and
$H=D_{m,n,p}-\hat{e}$. Then $H$ contains an isolated vertex. We
reconstruct $G$ from the leaf card $H$. If $G$ has a cycle, then
$G$ has an isolated vertex. Since any middle edge-card has no
isolated vertex, $G$ shares no middle decard with $D_{m,n,p}$.
Assume $G$ has no cycle in the following. That is, $G$ is
obtained from $H$ by adding an edge $e$ joining the isolated
vertex and a $d(\hat{e})$-vertex.

If $d(\hat{e})\geq 3$, then $G\cong D_{m,n,p}$.

If $d(\hat{e})=2$, then $G$ is obtained from $H$ by adding an
edge $e$ joining the isolated vertex and a $2$-vertex. If $m=2$,
$p=4$ and $G\ncong D_{2,n,4}$, $G$ is $D_{2,n,3}^{1,0}$ or
$D_{2,n,2}^{2,0}$. The graph obtained by deleting any  $2$-edge
$e'$ from $G$ has an isolated vertex or a path. Hence, $G$
shares no middle decard with $D_{2,n,4}$. If $m=1,n=2,p=3$ and
$G\ncong D_{1,2,3}$, then $G\cong D_{1,2,2}^{0,1}$. There is no
$2$-edge other than $e$ in $G$. Hence, $G$ shares no middle
decard with $D_{1,2,3}$.

If $d(\hat{e})=1$, then $m=1,n\geq2,p=3$ by the assumption. The
graph $G$ is obtained from $H$ by adding an edge $e$ joining a
leaf and the isolated vertex. If $G\ncong D_{1,n,3}$, then
$G\cong D_{1,n,2}^{0,1}$. $G$ has no $2$-edge or a
 $2$-edge incident to a leaf when $n=2$. Hence, $G$ shares no middle
decard with $D_{1,n,3}$.
\end{proof}

\begin{lemma}\label{hubmiddle}
A hub decard and a middle decard determine $D_{m,n,p}$, except
$m=n-1$ and $p\geq 4$, or $m=2$ and $p\geq 6$.
\end{lemma}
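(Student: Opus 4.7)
The plan is to fix the given hub decard, enumerate the reconstructions $G$ compatible with it, and then use the middle decard to exclude all alternatives to $D_{m,n,p}$. The analysis splits according to whether the hub decard is the right one $(B_{m,p-1}+K_{1,n},n+1)$ or the left one $(K_{1,m}+B_{n,p-1},m+1)$; when $m=1$ only the right hub edge exists. In each case a reconstruction arises by adding an edge of the prescribed degree between non-adjacent vertices whose degree-sum in the relevant edge-card equals that degree. Applying the pair-by-pair degree analysis used in Lemma~\ref{hub}, the candidates for $G\ncong D_{m,n,p}$ fall into three families: (a) tree reconstructions with two high-degree vertices joined by a short (possibly subdivided) degree-$2$ path, such as the ``two-hub'' tree obtained by joining the star-center to a broom-center-leaf or the double-broom $D_{m,n,3}^{p-3,0}$; (b) cycle graphs, which can appear only when $n=m+1$, $n=m+2$, or $m=n=3$; and (c), when $m=2$ under the left hub decard, branched trees $G_i$ formed by joining a leaf of $K_{1,2}$ to an interior path-vertex $u_i$ of $B_{n,p-1}$.

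A middle decard of $D_{m,n,p}$ has the form $(B_{m,b}+B_{n,p-b},2)$, so both of its components must be brooms, and a broom has a unique vertex of degree at least~$3$. In family (a) the two high-degree vertices of $G$ are joined by a degree-$2$ path, and every middle edge of $G$ lies on this path or on a subdivided leaf edge; deleting any of them leaves a component still containing both high-degree vertices, which therefore is not a broom. In family (b), the sub-cases $n=m+2$ and $m=n=3$ either admit no middle edges in the broom-component, or produce middle decards whose component sizes and degree sequences fail to match $(B_{m,b}+B_{n,p-b},2)$. In family (c), branched trees $G_i$ with $i\neq p-3$ produce middle decards with one component being a short path or spider that is not a broom. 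Consequently, outside the exceptional cases, the hub decard together with any middle decard determines $D_{m,n,p}$.

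The exception clauses correspond to the two remaining possibilities. For (b) with $n=m+1$ and $p\geq 4$: deleting the middle edge of the cycle adjacent to the broom-center $c_B$ collapses the cycle into a path in which $c_B$'s other cycle-neighbor becomes a pendant, so the cycle-component becomes $B_{m+1,p-2}$; together with the untouched $K_{1,m+1}\cong B_{m,2}$ this yields $(B_{m,2}+B_{m+1,p-2},2)$, which is the $b=2$ middle decard of $D_{m,m+1,p}$. For (c) with $m=2$ and $p\geq 6$: taking $i=p-3$ and deleting the middle edge $u_{p-5}u_{p-4}$ severs a broom $B_{n,p-4}$ from a spider centered at $u_{p-3}$ whose three branches --- the pendants $u_{p-2}$ and $u_{p-4}$, and the length-$3$ path $u_{p-3}-a-v-b$ --- reassemble into $B_{2,4}$, yielding $(B_{2,4}+B_{n,p-4},2)$, the $b=4$ middle decard of $D_{2,n,p}$. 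The principal obstacle in executing the plan is the elimination step, where one must systematically verify in every sub-case of families (a), (b), and (c) that the component of $G-e_m$ containing both high-degree vertices fails to be a broom, paying particular attention to short-path borderline cases $p\in\{4,5\}$ and to degenerate sub-cases with $m$ or $n$ equal to~$2$.
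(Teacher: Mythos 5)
Your strategy runs in the opposite direction from the paper's: you fix the hub decard and try to kill the alternatives with the middle decard, whereas the paper reconstructs from the middle card $B_{m,a}+B_{n,p-a}$ (where the only move is to join two non-adjacent $1$-vertices, so the candidates are easy to list) and then eliminates them with the hub decard, using the simple observation that every hub edge-card contains a star component $K_{1,m}$ or $K_{1,n}$. Your direction is workable in principle, but as written it has genuine gaps. Your enumeration of reconstructions is borrowed from Lemma~\ref{hub}, which analyzes only the \emph{right} hub decard and only under $m\ge2$, $n\ge3$, $n\ne m+1$; the left hub decard $(K_{1,m}+B_{n,p-1},m+1)$ has its own reconstruction list that your families (a)--(c) do not cover (for instance, when $m=3$ one may join two non-adjacent $2$-vertices of $B_{n,p-1}$ and obtain a unicyclic graph for \emph{every} $n\ge 3$, not only for $n\in\{m+1,m+2\}$ or $m=n=3$); the excluded case $n=m+1$ is exactly where Lemma~\ref{hub} gives you nothing to borrow; and for $m=1$ the phrase ``component containing both high-degree vertices'' is vacuous, so family (a) does not dispose of, e.g., $D_{1,3,p}$ with $p\ge5$. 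You concede in your final sentence that ``the elimination step'' must still be ``systematically verified'' in every sub-case; that verification is the entire content of the lemma, so what you have is a plan rather than a proof.

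In addition, your witness for the exception $m=2$, $p\ge6$ is off by one and fails as stated. If the path $a$--$v$--$b$ is attached at $u_{p-3}$, then $u_{p-2}$ is not a pendant (it is still adjacent to $u_{p-1}$), and deleting $u_{p-5}u_{p-4}$ leaves $B_{n,p-5}$ together with a $7$-vertex spider whose branches at $u_{p-3}$ have lengths $1$, $2$, $3$ --- not $B_{2,4}+B_{n,p-4}$. The correct witness attaches the path at $u_{p-2}$, giving $G_{p-2}\cong D_{2,n,p-2}^{2,0}$, and deletes $u_{p-4}u_{p-3}$: then $u_{p-2}$ carries the two pendants $u_{p-1},u_{p-3}$ and the path through $a,v,b$, so the card is $(B_{2,4}+B_{n,p-4},2)$, which is exactly the witness the paper exhibits in the proof of its second theorem. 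Your $n=m+1$ witness is essentially the paper's (the unicyclic graph $C_{p-1}$ with $m$ pendants at one cycle vertex, plus $K_{1,m+1}$), though the edge you delete is the cycle edge at distance one from the broom-center, not an edge incident to it --- both cycle edges at the center have degree $m+2$ and are not middle edges.
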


\begin{proof} By Lemma~\ref{hub},
one hub decard determines $D_{m,n,p}$ when $p\in \{2,3\}$.
Assume $p\geq4$. Let $H=B_{m,a}+B_{n,p-a}$ be the given middle
card. The graph $G$ is obtained from $H$ by adding an edge $e$
joining two non-adjacent $1$-vertices.

If $e$  joins two $1$-vertices in one component, we have a cycle
$C$. If $m\ne n-1$, the graph obtained by deleting any edge on
$C$ has no $K_{1,m}$ or $K_{1,n}$ as a component. Since any hub
edge-card has $K_{1,m}$ or $K_{1,n}$ as a component, $G$ shares
no hub decard with $D_{m,n,p}$.

Assume $e$ joins two $1$-vertices in two components
respectively. We discuss the cases according to $m$. If $m=1$,
by Lemma~\ref{hub}, we only need consider $m=1,n=3,p\geq5$. If
$G\ncong D_{1,3,p}$, then $G$ is obtained from a $(p+2)$-vertex
path by adding  $2$ neighbors to a vertex at distance at least
$2$ from its end. The graph obtained by deleting any edge from
$G$ cannot have $K_{1,3}$ component. Since the hub decard is
$(B_{1,p-1}+K_{1,3},4)$,  $G$ shares no hub decard with
$D_{1,3,p}$. If $m=2$, by the assumption, $p\in \{4,5\}$. If
$m=2, p=4$,  then the reconstruction $G$ is $D_{2,n,4}$. If
$m=2, p=5$ and $G \ncong D_{2,n,5}$, then $G$ is
$D_{2,n,4}^{0,1}$ or $D_{2,n,4}^{1,0}$. The graph obtained by
deleting a $(m+1)$-edge or $(n+1)$-edge from $G$ cannot contain
both a  broom component and a $K_{1,2}$ or $K_{1,n}$ component.
$G$ shares no hub decard with $D_{2,n,5}$. If $m\geq 3$ and
$G\ncong D_{m,n,p}$, $G$ has three possibilities
$D_{m,n,4}^{a-2,p-a-2}$,  $D_{m,n,a+2}^{0,p-a-2}$ or
$D_{m,n,p-a-2}^{a-2,0}$. The graph obtained by deleting a
$(m+1)$-edge or $(n+1)$-edge from $G$ cannot contain both a
broom component and a $K_{1,m}$ or $K_{1,n}$ component. Hence,
$G$ shares no hub decard with $D_{m,n,p}$.
\end{proof}

\section{Main Results} \label{sec:results}

\begin{theorem}
\label{main}
For the double-broom $D_{m,n,p}$ with $1 \leq m \leq n$ and $p\geq 2$,
$$\dern(D_{m,n,p})=
 \begin{cases}
    1,   & \textrm{if there is an edge satisfying the condition of Lemma~\ref{leme1};}\\
    2,   & \textrm{otherwise.}
\end{cases}$$
\end{theorem}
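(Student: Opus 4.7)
My plan is to treat the two cases in the piecewise definition separately. The first case is immediate from Lemma~\ref{leme1} itself: if there is an edge $e$ in $D_{m,n,p}$ such that no two non-adjacent vertices of $D_{m,n,p}-e$ other than the endpoints of $e$ have degree-sum $d(e)$, then the decard $(D_{m,n,p}-e,d(e))$ alone determines $D_{m,n,p}$, giving $\dern(D_{m,n,p})\le 1$; and $\dern$ is always at least $1$.

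The substantive content is the second case: assuming no edge of $D_{m,n,p}$ satisfies the hypothesis of Lemma~\ref{leme1}, I must establish both $\dern\ge 2$ and $\dern\le 2$. For $\dern\ge 2$, I must show that every single decard $(D_{m,n,p}-e,d(e))$ admits a second reconstruction. The failure of the Lemma~\ref{leme1} condition at $e$ supplies two non-adjacent vertices $u,v$ of $D_{m,n,p}-e$, distinct from the endpoints of $e$, with $d(u)+d(v)=d(e)$; then $G'=(D_{m,n,p}-e)+uv$ shares that decard with $D_{m,n,p}$. I then verify $G'\not\cong D_{m,n,p}$. Whenever $u,v$ can be chosen in the same component of the forest $D_{m,n,p}-e$, the graph $G'$ contains a cycle and so cannot be the tree $D_{m,n,p}$. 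When all alternative pairs $u,v$ must lie in different components, I choose them so that $G'$ differs from $D_{m,n,p}$ in degree sequence or diameter. The edge-type of $e$ (leaf, middle, or hub) tightly constrains the local structure of $D_{m,n,p}-e$, so this amounts to a short finite check organized by the classification of edges introduced in Section~\ref{sec:intro}.

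For $\dern\le 2$, I combine Lemmas~\ref{hub}--\ref{hubmiddle} to exhibit a suitable pair of decards for each remaining double-broom. Most configurations are covered directly: two distinct middle decards work via Lemma~\ref{middle} outside its exception; a leaf decard combined with a hub decard works via Lemma~\ref{leafhub}; a hub-plus-middle pair works via Lemma~\ref{hubmiddle}; and so on. The main obstacle is the handful of double-brooms that appear simultaneously in several exception lists—most notably $D_{1,2,p}$ for $p\ge 3$, $D_{2,n,p}$ for large $p$, $D_{m,m+1,p}$, and $D_{2,n,5}$. For these I must explicitly pick a pair of decards that avoids every relevant exception, typically a leaf decard paired with a middle decard of carefully chosen subdivision parameter $a$, or two middle decards at specific values of $a$ that distinguish the reconstruction from the small list of alternative graphs.

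The hardest part of the argument is the final reconciliation: verifying that for every double-broom $D_{m,n,p}$ with no good single decard, at least one pair combination escapes all six exception lists from Lemmas~\ref{hub}--\ref{hubmiddle}. I expect this to reduce to a short table-driven case check, since the exceptions are narrow parameter windows and they do not all overlap. A concrete danger is a degenerate family like $D_{1,n,p}$ with small $n$ and large $p$, where few edge types exist and where several lemmas degrade simultaneously; I plan to handle these by direct construction of the second reconstruction, appealing to the explicit analyses already carried out inside the proofs of Lemmas~\ref{middle} and~\ref{oneleaf}.
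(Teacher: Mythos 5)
Your proposal follows essentially the same route as the paper: the first case is immediate from Lemma~\ref{leme1}, the lower bound in the second case comes from turning the failure of that lemma's hypothesis into an explicit second reconstruction, and the upper bound is assembled from Lemmas~\ref{middle}--\ref{hubmiddle} by checking that some pair of decards escapes every exception list. The only difference is one of explicitness --- the paper compresses both the lower-bound verification and the reconciliation of exceptions into single sentences, whereas you correctly flag them as the steps requiring a finite case check.
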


\begin{proof}
The decard satisfying the condition of Lemma~\ref{leme1}
determines it. Otherwise, any decard cannot determine it, hence
$\dern(D_{m,n,p})\ge 2$. By the proofs of
Lemmas~\ref{middle},~\ref{oneleaf},~\ref{leafhub},
\ref{leafmiddle} and \ref{hubmiddle}, there are two decards
determine $D_{m,n,p}$. Hence, we have $\dern(D_{m,n,p})\leq 2$.
\end{proof}

\begin{theorem}
\label{main}
For the double-broom $D_{m,n,p}$ with $1 \leq m \leq n$ and $p\geq 2$,
$$\adern(D_{m,n,p})=
 \begin{cases}
    1,   & \textrm{for $D_{m,n,2} (2\notin \{m,n\}$ and $m\neq n-2$),~or~$D_{m,n,3}$($m\geq 4$ and $m\neq n-2$);}\\
    5,   & \textrm{for $D_{1,2,4}$, $D_{1,2,p}(p\geq 6)$,~or~$D_{m,m+2,3}(m\geq 2$);}\\
    4,   & \textrm{for $D_{1,2,5}$, $D_{1,3,3}$~or~$D_{2,n,p}(p\geq5)$;}\\
    3,   & \textrm{for $D_{1,1,p}(p\geq 3)$, $D_{1,2,p}(p\leq 3)$, $D_{1,n,p}(n\geq 4, p\geq 4)$,}\\
    & \textrm{ $D_{2,n,p}(p\leq4, \mbox{except}\ D_{2,4,3} )$, $D_{3,n,p}(p\geq 4)$, $D_{m,m,p}(m\geq4 ,p\geq 5)$,}\\
    & \textrm{ $D_{m,m+1,p}(m\geq 4,p\geq 4)$, or~$D_{m,m+2,p}(m\notin \{2,3\}, p\neq 3)$,or~$D_{3,5,2}$;}\\
    2,   & \textrm{otherwise.}
\end{cases}$$
\end{theorem}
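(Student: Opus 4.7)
The plan is to bracket each value of $\adern(D_{m,n,p})$ by a lower bound coming from an explicit alternative (``confusion'') graph that shares many decards with $D_{m,n,p}$, together with an upper bound obtained by combining the lemmas of Section~2 with a pigeonhole argument on the dedeck. First I would enumerate the dedeck: $m$ copies of $(D_{m-1,n,p}+K_1,m)$, $n$ copies of $(D_{m,n-1,p}+K_1,n)$, up to two hub decards, and the middle decards $(B_{m,a}+B_{n,p-a},2)$ for $1\le a\le p-1$ (with multiplicity $2$ only when $m=n$ and $a=p-a$). The key observation driving the upper bound is that whenever we pick many decards, by pigeonhole we are guaranteed to hit a pair of the right types, and unless we lie in an exceptional family of Lemmas~\ref{hub}--\ref{hubmiddle} such a pair determines $D_{m,n,p}$.

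I would then run through the cases of the theorem. For $\adern=1$ I verify that every decard satisfies Lemma~\ref{leme1}; this happens only for small-$p$ double-brooms where no two non-adjacent vertices have degree-sum equal to any edge degree. For $\adern=2$ I show that some decard fails Lemma~\ref{leme1} (hence $\adern\ge2$) but that no pair of decards falls into the exceptional families of Lemmas~\ref{middle}--\ref{hubmiddle}. For $\adern\ge3$ I identify, for each exceptional family in those lemmas, the specific confusion graph $G'$ already constructed in their proofs---typically one of $D_{m,n,p-1}^{0,1}$, $D_{m,n,b+2}^{0,p-b-2}$, $C_{p+2}+K_1$, or the graph $G_1$ of Figure~\ref{f2}---and count how many decards it shares with $D_{m,n,p}$. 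That shared count gives $\adern(D_{m,n,p})\ge k+1$ by producing a bad $k$-subset of the dedeck, while the matching upper bound follows by showing that any $(k+1)$-subset must contain a determining pair that escapes every exception, again by pigeonhole on left leaf, right leaf, middle, and hub types.

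The main obstacle is the bookkeeping for the high-$\adern$ cases, in particular $D_{m,m+2,3}$ with $m\ge2$, $D_{1,2,p}$ with $p\ge6$, and $D_{2,n,p}$ with $p\ge5$. In each of these, several distinct confusion graphs coexist, and one must verify that none of them matches more than $\adern-1$ decards while at least one attains this bound. For $D_{m,m+2,3}$, for instance, the cycle-based graph $G_1$ of Figure~\ref{f2} shares $\min\{4,m+2\}$ right leaf decards with $D_{m,m+2,3}$, forcing $\adern\ge5$; one must then check that no five-element subset of the dedeck is mutually consistent with $G_1$ or with any other alternative reconstruction. Similarly, for $D_{1,2,p}$ with $p\ge6$ the confusion graph $D_{1,2,p-1}^{0,1}$ matches two right leaf decards plus two middle decards (via the analysis of Lemma~\ref{middle}), accounting for the bound $\adern=5$. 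The verifications are diameter and component-degree arguments of exactly the same flavor as those appearing in Lemmas~\ref{oneleaf}--\ref{hubmiddle}, but the case list is long and must be carried out uniformly; the cleanest organization is to tabulate, for each exceptional family, the unique worst confusion graph together with the exact count of shared decards of each type, and to check that adding any further decard breaks consistency.
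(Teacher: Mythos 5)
Your overall strategy---explicit confusion graphs for the lower bounds, plus a lemma-driven case analysis for the upper bounds---is exactly the paper's, but two of your concrete claims do not hold up. First, for $D_{1,2,p}$ with $p\ge 6$ you name $D_{1,2,p-1}^{0,1}$ as the confusion graph and assert it shares two right leaf decards and two middle decards with $D_{1,2,p}$. It does not: $D_{1,2,p-1}^{0,1}$ has only one leg of length $1$, hence only one copy of the right leaf decard $(D_{1,1,p}+K_1,2)$, and checking its edge-deleted subgraphs shows it shares only three decards with $D_{1,2,p}$ (one right leaf, one left leaf of degree $1$, one middle). That yields only $\adern(D_{1,2,p})\ge 4$. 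The graph that actually realizes $\adern\ge 5$ is $D_{1,2,p-2}^{0,2}$, which shares a leaf decard, the hub decard $(B_{1,p-1}+K_{1,2},3)$, and two middle decards with $D_{1,2,p}$; without it your lower bound for this family is missing.

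Second, your stated mechanism for the upper bounds---that any $(k+1)$-subset of the dedeck ``must contain a determining pair that escapes every exception'' by pigeonhole---fails precisely in the high-$\adern$ cases. For $D_{m,m+2,3}$ with $m\ge 3$ a $5$-subset may consist of five identical right leaf decards, and for $D_{1,2,p}$ with $p\ge 6$ a $5$-subset may consist of two leaf decards, two middle decards, and one more; in neither situation does any pair in the set determine the graph, since every pair lies in an exceptional family of Lemmas~\ref{middle}--\ref{hubmiddle}. The paper closes these cases not by locating a determining pair but by bounding the multiset intersection of the dedecks: $G_1+K_1$ contains only four copies of $(D_{m,m+1,3}+K_1,m+2)$, and every reconstruction from $(D_{1,1,p}+K_1,2)$ other than $D_{1,2,p}$ contains at most four decards of $\cS$. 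You gesture at this kind of verification in your final paragraph, but as written the pigeonhole step is the load-bearing one, and it breaks; the same multiplicity-counting repair is also needed in several of the $\adern=3$ and $\adern=4$ cases (for example $D_{2,n,p}$ with $p\ge5$, where $\cS$ may contain two hub decards or two identical middle decards).
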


\begin{proof} In arguments for upper bounds, we will always let $\cS$ be a given
list of decards of $D_{m,n,p}$ and $G$ be a reconstruction from
$\cS$. If this forces $G\cong D_{m,n,p}$ wherever $|S|=k$, then
$\adern(D_{m,n,p})\leq k$, but a single exception yields
$\adern(D_{m,n,p})\ge k$.

{\it Case 1: Every edge of $D_{m,n,p}$ satisfies the condition
of Lemma~\ref{leme1}.} By Lemma~\ref{leme1}, every decard
determines it.

{\it Case 2: The graph has three possibilities.}

{\it The graph is $D_{1,2,4}$.} The graphs $D_{1,2,4}$ and
$D_{1,2,2}^{0,2}$ have four common decards: $(D_{1,2,3},1)$,
$(D_{1,1,4},2)$, $(B_{1,1}+B_{2,3},2)$, and
$(B_{1,3}+K_{1,2},3)$. Hence, $\adern(D_{1,2,4})\geq 5$.
For the upper bound, if $|\cS|=5$,
then $\cS$ has either $2$ middle decards
or $3$ leaf decards. By Lemmas~\ref{middle} and~\ref{oneleaf}, the
reconstruction $G$ is $D_{1,2,4}$. We have
$\adern(D_{1,2,4})\leq 5$.

{\it The graph is $D_{1,2,p}$ and $p\geq 6$.} The graphs
$D_{1,2,p}$ and $D_{1,2,p-2}^{0,2}$ have four common decards:
$(B_{1,1,p}+K_{1},2)$, $(B_{1,p-1}+K_{1,2},3)$,
$(B_{1,1}+B_{2,p-1},2)$, and $(B_{1,p-4}+B_{2,4},2)$. For the
upper bound, assume $|\cS|=5$. By Lemmas~\ref{middle}
and~\ref{oneleaf}, $3$ leaf decards or $3$ middle decards
determine it. Assume $\cS$ has $2$ leaf decards and $2$ middle
decards. Since $D_{1,2,p}$ has one left leaf decard, $\cS$
contains one right leaf decard $(B_{1,1,p}+K_1,2)$. The
reconstruction from $(B_{1,1,p}+K_1,2)$ other than $D_{1,2,p}$
is $C_{p+2}+K_1$ or $B_{1,2,a}^{0,p-a}(2\leq a\leq p-2)$. But
each has at most four decards in $\cS$. We have
$\adern(D_{1,2,p})\leq 5 (p\geq 6)$.

{\it The graph is $D_{m,m+2,3}(m\geq 2).$} By the proof of
Lemma~\ref{oneleaf}, $D_{m,m+2,3}$ and $G_1+K_1$ have four
common right leaf decards when $m\geq 2$. For the upper bound,
assume $|\cS|=5$. By Lemmas~\ref{hub} and~\ref{oneleaf}, one hub
decard or $3$ leaf decards containing at least one left leaf
decard determine it. Assume $\cS$ has $5$ right leaf decards
$(D_{m,m+1,3}+K_1, m+2)$. The reconstruction from
$(D_{m,m+1,3}+K_1, m+2)$ other than $D_{m,m+2,3}$ is $G_1+K_1$.
But there are four  decards $(D_{m,m+1,3}+K_1, m+2)$ in the
dedeck of $G_1+K_1$. We have $\adern(D_{m,m+2,3})\leq 5 (m\geq
2)$.

{\it Case 3: The graph has three possibilities.}

{\it The graph is $D_{1,2,5}$.} The graphs $D_{1,2,5}$ and
$D_{1,2,3}^{0,2}$ have three common decards:
$(B_{1,1,5}+K_{1},2)$, $(B_{1,1}+B_{2,4},2)$, and
$(B_{1,4}+K_{1,2},3)$.  For the upper bound, assume $|\cS|=4$.
By Lemmas~\ref{middle} and~\ref{oneleaf}, we may assume $\cS$
has $2$ leaf decards, $1$ middle decard and $1$ hub decard.
Since $D_{1,2,5}$ has one left leaf decard, $\cS$ contains one
right leaf decard $(B_{1,1,5}+K_1,2)$. The reconstruction from
$(B_{1,1,p}+K_1,2)$ other than $D_{1,2,5}$ is $C_{7}+K_1$ or
$B_{1,2,a}^{0,5-a}(2\leq a\leq 3)$. But each has at most three
decards in $\cS$. We have $\adern(D_{1,2,5})\leq 4$.

{\it The graph is $D_{1,3,3}.$} By the proof of
Lemma~\ref{oneleaf}, $D_{1,3,3}$ and $G_1+K_1$ have three common
right leaf decards: $(D_{1,2,3}+K_1, 3)$. For the upper bound,
assume $|\cS|=4$. Since there are $6$ decards in its dedeck,
$\cS$ has the hub decard or one middle decard and a leaf decard
or two distinct leaf decards. By
Lemmas~\ref{hub},~\ref{oneleaf}, and~\ref{leafmiddle},  the
reconstruction is $D_{1,3,3}$. We have $\adern(D_{1,3,3})\leq
4$.

{\it The graph is $D_{2,n,p}(p\geq 5).$} The graphs $D_{2,n,5}$
and $D_{2,n,3}^{2,0}$ have three common decards:
$(B_{1,n,5}+K_{1},2)$, $(K_{1,2}+B_{n,4},3)$, and
$(K_{1,n}+B_{2,4},n+1)$. The graphs $D_{2,n,p}(p\geq 6)$ and
$D_{2,n,p-2}^{2,0}$ have three common decards:
$(B_{1,n,p}+K_{1},2)$, $(B_{2,4}+B_{n,p-4},2)$, and
$(B_{1,p-1}+K_{1,2},3)$. If $|\cS|=4$, by Lemmas~\ref{middle}
and~\ref{oneleaf}, we need consider the following cases. If
$\cS$ has $2$ identical leaf decards and other kinds of decards,
the reconstruction $G$ from a leaf decard having two identical
leaf decards other than $D_{2,n,p}$  has an isolated vertex and
shares no other kinds of decard with $D_{2,n,p}$. If $\cS$ has
$2$ identical middle decards and other kinds of decards, then
$n=2$. Assume the middle decard is $B_{2,a}+B_{2,p-a}$. The
reconstruction from $B_{2,a}+B_{2,p-a}$ having two identical
middle decards other than $D_{2,2,p}$ has $B_{2,a}$ or
$B_{2,p-a}$ as a component and shares no other kinds of decard
with $D_{2,2,p}$. If $\cS$ has $2$ hub decards and other kinds
of decards, by Lemmas~\ref{hub}, we may assume $n=2$, or $n=3$,
or $n\geq 4$ and $p=5$. When $n=2$, the reconstruction $G$ from
a hub decard having two hub decards other than $D_{2,2,p}$ has
$K_{1,2}$ as a component or is $D_{2,2,3}^{2,0}$ when $p=5$. But
$D_{2,2,3}^{2,0}$ shares three decards with $D_{2,2,5}$,
otherwise $G$ shares no other kinds of decard with $D_{2,2,p}$.
When $n=3$, the reconstruction from $(B_{2,p-1}+K_{1,3},4)$
other than $D_{2,3,p}$ is $D_{2,3,3}^{0,p-3}$, or
$D_{2,3,3}^{p-3,0}$. But each of them shares at most three
decards with $D_{2,3,p}$($D_{2,3,3}^{2,0}$ shares three decards
with $D_{2,3,5}$). When $n\geq 4$ and $p=5$, the reconstruction
from $(B_{2,4}+K_{1,n},n+1)$ other than $D_{2,n,5}$ is
$D_{2,n,3}^{2,0}$. But $D_{2,n,3}^{2,0}$ shares three decards
with $D_{2,n,5}$. We have $\adern(D_{2,n,p})\leq 4 (p\geq 5)$.

{\it Case 4: The graph has nine possibilities.}

{\it The graph is $D_{1,1,p}(p\geq 3)$.} The graphs $D_{1,1,p}$
and $C_{p}+B_{1,1}$ have two common decards:
$(B_{1,1}+B_{1,p-1},2)$. If $|\cS|=3$, then $\cS$ has $2$
distinct middle decards or $1$ leaf decard. By
Lemmas~\ref{leme1} and~\ref{middle}, the reconstruction $G$ is
$D_{1,1,p}$. We have $\adern(D_{1,1,p})\leq 3 (p\geq 3)$.

{\it The graph is $D_{1,2,p}(p\leq 3)$.} The graphs $D_{1,2,p}$
and $C_{p+2}+K_{1}$ have two common decards:
$(B_{1,1,p}+K_{1},2)$. If $|\cS|=3$, $\cS$ has $1$ hub decard,
or $3$ leaf decards, or $2$ leaf decards and $1$ middle decard.
By Lemmas~\ref{hub},~\ref{oneleaf}, and~\ref{leafmiddle}, $G$ is
$D_{1,2,p}$. We have $\adern(D_{1,2,p})\leq 3 (p\leq 3)$.

{\it The graph is  $D_{1,n,p}(n\geq 4, p\geq 4)$.} The graphs
$D_{1,n,p}$ and $D_{1,n,p-1}^{0,1}$ have two common decards:
$(B_{1,n,p-1}+K_{1},1)$ and $(B_{1,p-3}+B_{n,3},2)$. For the
upper bound, by the proofs of
Lemmas~\ref{hub},~\ref{middle},~\ref{oneleaf},~\ref{leafhub},~\ref{leafmiddle},
and~\ref{hubmiddle}, we may assume $\cS$ has $2$ middile decard
and $1$ leaf decard. Assume $\cS$ has $(B_{1,a}+B_{n,p-a},2)$.
The reconstruction from $(B_{1,a}+B_{n,p-a},2)$ other than
$D_{1,n,p}$ is $D_{1,n,a+2}^{0,p-a-2}$. But
$D_{1,n,a+2}^{0,p-a-2}$ shares at most two decards with
$D_{1,n,p}$. We have $\adern(D_{1,n,p})\leq 3 (n\geq 4, p\geq
4)$.

{\it The graph is  $D_{2,n,p}(p\leq4\  \mbox{except}\
D_{2,4,3})$.} Let $G_2$ be the graph obtained from
$C_{p+2}+K_{1}$ by appending $(n-1)$ vertices to one vertex on
the cycle. The graphs $D_{2,n,p}$ and $G_2$ have two common
decards: $(B_{1,n,p}+K_{1},2)$. If $|\cS|=3$, by the proofs of
Lemmas~\ref{hub},~\ref{oneleaf}, and~\ref{leafmiddle}, we may
assume $\cS$ has a hub decard and $p=4$. The reconstruction from
$(K_{1,2}+B_{n,3},3)$ or $(K_{1,n}+B_{2,3},n+1)$ other than
$D_{2,n,4}$ is $D_{2,n,3}^{0,1}$, $D_{2,n,3}^{1,0}$, or has a
$K_{1,2}$ component. But anyone of them shares at most two
decards with $D_{2,n,p}$. We have $\adern(D_{2,n,p})\leq 3
(p\leq 4\  \mbox{except}\ D_{2,4,3})$.

{\it The graph is  $D_{3,n,p}(p\geq 4)$.} Let $G_3$ be the graph
obtained from  $D_{2,n,p}+K_1$ by joining a $1$-vertex $u$ to
the vertex at distance $3$ from $u$. The graphs $D_{3,n,p}$ and
$G_3$ have two common left leaf decards: $(B_{2,n,p}+K_{1},3)$.
If $|\cS|=3$ and $n\neq 4$, by
Lemmas~\ref{leafhub},~\ref{leafmiddle}, and~\ref{hubmiddle}, we
may assume the decards in $\cS$ are all leaf decards or middle
decards. By Lemmas~\ref{middle} and~\ref{oneleaf}, the
reconstruction $G$ is $D_{3,n,p}$. If $n=4$, by
Lemmas~\ref{middle},~\ref{oneleaf},~\ref{leafhub},
and~\ref{leafmiddle}, assume $\cS$ has $2$ hub decards and $1$
middle decard. The reconstruction from a hub decard other than
$D_{m,m+1,p}$ is $D_{3,4,3}^{0,p-3}$ or $D_{3,4,3}^{p-3,0}$. But
each of them shares at most two decards with $D_{3,4,p}$. We
have $\adern(D_{3,n,p})\leq 4 (p\geq 4)$.

{\it The graph is  $D_{m,m,p}(m\geq 4,p\geq 5)$.} Let $G_4$ be
the graph obtained from $C_{p-1}+B_{m,2}$ by appending $(m-1)$
vertices to one vertex on the cycle. The graphs $G_4$ and
$D_{m,m,p}$ have two common decards: $(B_{m,2}+B_{m,p-2},2)$. If
$|\cS|=3$, by Lemmas~\ref{leafhub},~\ref{leafmiddle},
and~\ref{hubmiddle}, we may assume the decards in $\cS$ are all
leaf decards or middle decards. By Lemmas~\ref{middle}
and~\ref{oneleaf}, the reconstruction $G$ is $D_{m,m,p}$. We
have $\adern(D_{m,m,p})\leq 3 (m\geq 4,p\geq 5)$.

{\it The graph is $D_{m,m+1,p}(m\geq 4,p\geq 4)$.} The graphs
$D_{m,m+1,3}^{0,p-3}$ and $D_{m,m+1,p}$ have two common decards:
$(K_{1,m}+B_{m+1,p-1},m+1)$ and $(K_{1,m+1}+B_{m,p-1},m+2)$.
Hence, $\adern(D_{m,m+1,p})\geq 3$. By
Lemmas~\ref{middle},~\ref{oneleaf},~\ref{leafhub},
and~\ref{leafmiddle}, assume $\cS$ has $2$ hub decards and $1$
middle decard.  The reconstruction from a hub decard other than
$D_{m,m+1,p}$ is $D_{m,m+1,3}^{0,p-3}$ or $D_{m,m+1,3}^{p-3,0}$.
But each of them shares at most two decards with $D_{m,m+1,p}$.
We have $\adern(D_{m,m+1,p})\leq 3 (m\geq 4,p\geq 4)$.

{\it The graph is $D_{m,m+2,p}(m\notin \{2,3\} , p\neq 3)$ or
$D_{3,5,2}$.} Let $G_5$ be a graph obtained from $C_{p+1}+K_1$
by appending $m$ vertices to two vertices having a common
neighbor in $C_{p+1}$ respectively. The graphs $D_{m,m+2,p}$ and
$G_5$ have two common right leaf decards. Hence,
$\adern(D_{m,m+2,p})\geq 3$. If $|\cS|=3$ and $m\geq 3$, by
Lemmas~\ref{leafhub},~\ref{leafmiddle}, and~\ref{hubmiddle}, we
may assume the decards in $\cS$ are all leaf decards or middle
decards. By Lemmas~\ref{middle} and~\ref{oneleaf}, the
reconstruction $G$ is $D_{m,m+2,p}$. For $D_{1,3,p} (p\neq 3)$,
by the proofs of
Lemmas~\ref{middle},~\ref{oneleaf},~\ref{leafhub},~\ref{leafmiddle},
and~\ref{hubmiddle}, we may assume assume $\cS$ has $2$ middle
decards and the left leaf decard. The reconstruction from the
left leaf decard other than $D_{1,3,p}$ is $D_{1,3,p-1}^{0,1}$.
But $D_{1,3,p-1}^{0,1}$ shares two decards with $D_{1,3,p}$. We
have $\adern(D_{m,m+2,p})\leq 3 (m\notin \{2,3\} , p\neq 3)$ and
$\adern(D_{3,5,2})\leq 3$.

{\it Case 5: Otherwise, some edge fails the condition
of~\ref{leme1}.} Since some edge fails the condition
of~\ref{leme1}, $\adern(D_{m,n,p})\geq 2$. By the proofs of
Lemmas~\ref{hub},~\ref{middle},~\ref{oneleaf},~\ref{leafhub},~\ref{leafmiddle},
and~\ref{hubmiddle}, any two decards determine $D_{m,n,p}$. We
have $\adern(D_{m,n,p})=2$.
\end{proof}

Since the double-star is $D_{m,n,2}$, we can deduce the conclusion
which is obtained in~\cite{mse}.

\begin{corollary} For the double-star $D_{m,n,2}$ with $1 \leq m
\leq n$, $\dern(D_{m,n,2})=1$ and
$$\adern(D_{m,n,2})=
 \begin{cases}
    3,   & \textrm{either $n=m+2$ or $2\in\{m,n\}$;}\\
    1,   & \textrm{otherwise.}
\end{cases}$$
\end{corollary}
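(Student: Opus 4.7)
The plan is to read the corollary off Theorems~\ref{main} (both the $\dern$ and the $\adern$ versions) specialised to $p=2$. For the $\dern$ claim, I would exhibit a determining decard in every case. When $m\ge 2$, the hub decard $(K_{1,m}+K_{1,n},\,m+n)$ satisfies the condition of Lemma~\ref{leme1}: no non-adjacent pair of vertices in $K_{1,m}+K_{1,n}$ other than the two centres (the endpoints of the deleted edge) has degree-sum $m+n$, because every other pair contains a leaf of degree $1$, giving degree-sum at most $1+n<m+n$. When $m=1$, I would argue directly from the left-leaf decard $(K_{1,n+1}+K_1,1)$: adding an edge of degree $1$ to this card forces one new endpoint to have degree $1$, so the edge must join the isolated vertex to a leaf of $K_{1,n+1}$, and every such attachment produces a graph isomorphic to $D_{1,n,2}$. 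In both cases $\dern(D_{m,n,2})=1$.

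For the $\adern$ claim, I would follow the dichotomy in the corollary. If $2\notin\{m,n\}$ and $n\ne m+2$, every edge of $D_{m,n,2}$ satisfies Lemma~\ref{leme1}, so Case~1 of the $\adern$ theorem applies and $\adern=1$. Otherwise, $D_{m,n,2}$ matches exactly one of the $\adern=3$ families of that theorem: $D_{1,2,2}$ lies in ``$D_{1,2,p}$ with $p\le 3$''; the graphs $D_{2,n,2}$ with $n\ge 2$ lie in ``$D_{2,n,p}$ with $p\le 4$, except $D_{2,4,3}$''; the graph $D_{3,5,2}$ is explicitly listed; and the remaining $n=m+2$ cases, namely $D_{1,3,2}$ and $D_{m,m+2,2}$ with $m\ge 4$, lie in ``$D_{m,m+2,p}$ with $m\notin\{2,3\},\,p\ne 3$''. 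No choice of $(m,n)$ with $p=2$ lands in any of the $\adern\in\{4,5\}$ families, so the upper bound $\adern\le 3$ is inherited, while the matching lower bound $\adern\ge 3$ is supplied by the explicit sibling graphs (such as $G_1+K_1$ or $G_5$) already built in each family's portion of the proof.

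The main obstacle is the $\dern=1$ verification for the small cases $D_{1,2,2}$ and $D_{1,3,2}$. For these, no edge literally meets the hypothesis of Lemma~\ref{leme1}, since the isolated vertex in any leaf-card forms a non-adjacent pair of matching degree-sum with each remaining leaf of the surviving star. The saving observation is that all leaves of $K_{1,n+1}$ lie in a single automorphism orbit, so every ``alternative'' reconstruction provided by these extra pairs is isomorphic to $D_{1,n,2}$ itself. Once this symmetry point is noted, the remainder of the argument is routine bookkeeping of the $p=2$ rows of the two main theorems.
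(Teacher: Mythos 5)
Your proposal is correct and follows essentially the same route as the paper, which offers no argument for the corollary beyond the remark that it is the specialisation of the two main theorems to $p=2$; your case-matching of each $D_{m,n,2}$ to the appropriate branch of the $\adern$ theorem is exactly the intended deduction and is accurate (no $p=2$ instance falls into a $4$- or $5$-valued family, and the exceptional pairs $2\in\{m,n\}$ or $n=m+2$ land precisely in the $3$-valued families, including the explicitly listed $D_{3,5,2}$). The one place where you genuinely add something is the $\dern$ claim for $m=1$: as you observe, the left leaf card $K_{1,n+1}+K_1$ has several non-adjacent pairs of degree-sum $1$ besides the endpoints of the deleted edge, so Lemma~\ref{leme1} does not literally apply, and for $D_{1,2,2}$ and $D_{1,3,2}$ no edge at all meets its hypothesis --- which means the first main theorem, read literally, would return $\dern=2$ there. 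Your direct symmetry argument (all leaves of $K_{1,n+1}$ lie in one orbit, so every reconstruction from the decard $(K_{1,n+1}+K_1,1)$ is isomorphic to $D_{1,n,2}$) is therefore not optional bookkeeping but the step that actually makes the corollary's $\dern=1$ claim go through; the paper glosses over this point entirely.
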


\section*{Acknowledgements}
This work was supported by the National Natural Science
Foundation of China (Grant No. 11101378) and Zhejiang Natural
Science Foundation (Grant No. LY14A010009).


\begin{thebibliography}{99}
\frenchspacing
{\small
\bibitem{BW}
M.D.~Barrus, D.B.~West, Degree-associated reconstruction number
of graphs, {\it Discrete Math.} 310 (2010), 2600--2612.



\bibitem{H}
F.~Harary,
On the reconstruction of a graph from a collection of subgraphs,
{\it Theory of Graphs and its Applications (Proc. Sympos. Smolenice, 1963)}
(Publ. House Czechoslovak Acad. Sci., Prague, 1964), 47--52.


\bibitem{kel1}
P.~J.~Kelly,
On isometric transformations,
PhD Thesis, University of Wisconsin-Madison, 1942.

\bibitem{kel2}
P.~J.~Kelly,
A congruence theorem for trees,
{\it Pacific J. Math.} 7 (1957), 961--968.


\bibitem{MSSW}
M. Ma, H. Shi, H. Spinoza, D.~B.~West, Degree-associated
reconstruction parameters of complete multipartite graphs and
their complements, {\it Taiwan J. Math.} 19 (2015), 1271--1284.

\bibitem{MSW}
M. Ma, H. Shi, D.~B.~West, The adversary degree-associated
reconstruction number of double blooms, {\it J. Discrete
Algorithms} 33 (2015), 150--159.

\bibitem{mds}
S. Monikandan, P. Anusha Devi, S. Sundar Raj, Degree associated
edge reconstruction number of graphs, {\it J. Discrete
Algorithms} 23 (2013), 35--41.

\bibitem{mse}
S.~Monikandan, S.~Sundar Raj, Degree associated edge
reconstruction number, in {\it Proc. 23rd International Workshop
on Combinatorial Algorithm (IWOCA 2012)}, S. Arumugam and B.
Smyth, eds., {\it Lect. Notes Comp. Sci.} 7643 (2012), 100--109.

\bibitem{MSJS}
S.~Monikandan, S.~Sundar Raj, C. Jayasekaran, A.P.
Santhakumaran, A note on the adversary degree associated
reconstruction number of graphs, {\it J. Discrete Math.} Volume
2013 (2013), Article ID 808105, 5 pages,
dx.doi.org/10.1155/2013/808105.



\bibitem{U}
S.~M.~Ulam, A collection of mathematical problems,
{\it Interscience Tracts in Pure and Applied Mathematics} 8 (Interscience
Publishers, 1960).}

\end{thebibliography}
\end{document}